\documentclass{article}
\usepackage{amsmath,amsthm,amssymb,graphicx}

\newcommand{\ga}{\alpha}
\newcommand{\gb}{\beta}

\newcommand{\gw}{\omega}

\newcommand{\gs}{\sigma}

\newcommand{\supp}{\mathrm{supp}}

\newcommand{\dom}{\mathrm{dom}}

\newcommand{\cantor}{2^\gw}

\newcommand{\power}{\mathcal{P}}
\newcommand{\vareps}{\varepsilon}

\newtheorem{theorem}{Theorem}[section]

\newtheorem{fact}[theorem]{Fact}
\newtheorem{proposition}[theorem]{Proposition}

\theoremstyle{definition}
\newtheorem{definition}[theorem]{Definition}

\title{The Boolean prime ideal theorem and Vitali sets\footnote{2020 AMS subject classification 03E25, 22F05. Keywords: Boolean Prime Ideal Theorem, Axiom of Dependent Choices, intersection model}}

\author{
	Jind{\v r}ich Zapletal\\
	University of Florida\\
	zapletal@ufl.edu}

\begin{document}
\maketitle

\begin{abstract}
	If ZFC is consistent then so is the theory ZF+DC+BPI+there is no Vitali set. The proof uses intersection models of choiceless set theory. 
\end{abstract}

\section{Introduction}

The Boolean Prime Ideal theorem (BPI) is the statement that every Boolean algebra carries an ultrafilter. This is a prominent consequence of the Axiom of Choice (AC); it is equivalent, among other things, to the statement that every consistent first order theory has a consistent completion. It is not equivalent to the Axiom of Choice, as a classical theorem of Halpern and L{\' e}vy \cite{halpern:bpi} showed. Later, Blass \cite{blass:bpi} showed that BPI in permutation models is connected with the Ramsey properties of structures used to generate the models.

Since the methods of \cite{halpern:bpi} and \cite{blass:bpi} necessarily imply a violation of the Axiom of Dependent Choices (DC), it is natural to ask whether the conjunction of the BPI and DC is equivalent to AC. \cite{pincus:original} found a model which satisfies BPI+DC, but in which the Axiom of Choice for collections of cardinality $\aleph_1$ fails. That claim remains beyond the ability of the present writer to check, but more recently \cite{ransom:symmetric} found such a model using a rather standard permutation technique. In all similar models in the current literature, e.g. \cite{schilhan:ordering}, the reals are well-ordered. To rectify this shortcoming, I show in this paper that ZF+BPI+DC does not imply the existence of Vitali sets.

\begin{theorem}
	\label{maintheorem}
	If ZFC is consistent then so is the theory ZF+DC+BPI+there is no Vitali set.
\end{theorem}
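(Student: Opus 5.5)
I plan to obtain the model as an \emph{intersection model}, following the abstract. Start with a ground model $V$ of ZFC (say $V=L$). Choose a Suslin, c.c.c.\ or more generally proper forcing $P$ that adds a generic object $x$ whose orbit structure under the Vitali equivalence relation $E_0$ (or $\mathbb{Q}$-translations on $\mathbb{R}$) is highly symmetric; a natural candidate is a product or iteration adding a \emph{$E_0$-class of Cohen reals} or a generic ultrafilter-like object. Take mutually generic filters $G_0, G_1$ (or a family $G_n$, $n\in\gw$) and let $W=\bigcap_n V[G_n]$, or more precisely the intersection of the models $V[G_n]$ obtained from a single generic for a product by forgetting coordinates. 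The model witnessing Theorem~\ref{maintheorem} will then be a suitable inner model of $W$, e.g.\ $L(\mathbb{R})^W$ or the HOD-type model $V(\text{generic object})$ computed inside the intersection.

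The verification splits into three parts. \textbf{DC:} intersections of countably closed-over-$V$ extensions, or more simply models of the form $V[H]\cap V[K]$ where each $V[G]$ satisfies AC, inherit DC provided every countable sequence of elements of $W$ lies in $W$; this should follow because each $V[G_n]$ contains all countable sequences of the intersection model's elements once the forcing is arranged so that $\gw$-sequences from the intersection are determined in a way all factors agree on (a standard argument via mutual genericity and the fact that any countable subset of $W$ lying in two mutually generic extensions lies in $V$ after a suitable reindexing). \textbf{No Vitali set:} suppose $A\in W$ is a Vitali selector. Then $A\in V[G_0]\cap V[G_1]$; pick a real $x$ generic over $V$ whose $E_0$-class is symmetric and use a homogeneity argument: some condition decides ``$x\in A$'', and by mutual genericity the two models must define $A$ from parameters in $V$ only, so applying an automorphism shifting $x$ within its class to $x'$ yields $x'\in A$ too, contradicting selection.

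\textbf{BPI:} this is where I expect the main obstacle. Given a Boolean algebra $B\in W$, it lies in each $V[G_n]$, where AC gives ultrafilters, but we need one in the intersection. The plan is to show that $B$ has an ultrafilter definable from a parameter available in all $V[G_n]$: using a Ramsey-type/partition property of the generic structure (in the spirit of Blass's connection between BPI and Ramsey properties, and Halpern--L\'evy's use of the Halpern--L\"auchli theorem), show that for every $B$ there is in $V$ (or in the common part) a coherent system of finite approximations, and that the ultrafilter built in $V[G_0]$ from a name can be chosen so that its evaluation does not depend on the coordinate, so it lies in $V[G_1]$ as well. I would first try to prove an amalgamation lemma: if $F_0\in V[G_0]$ and $F_1\in V[G_1]$ are ultrafilters on $B$, there is a compactness argument in $V[G_0,G_1]$ producing one ultrafilter in both, using Halpern--L\"auchli to homogenize the names. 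Getting this compactness argument to go through, while simultaneously retaining DC, is the hard core of the proof.
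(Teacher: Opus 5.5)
There is a genuine gap, and it starts with the construction. If $G_0,G_1$ are mutually generic over $V$, then $V[G_0]\cap V[G_1]=V$ by the standard product forcing argument. So any $W$ formed from a family containing two mutually generic filters is just $V$, which satisfies AC and has a Vitali set. Your DC argument relies explicitly on this mutual genericity. Your homogeneity argument needs the class of the generic real $x$ to belong to $W$, since otherwise the selector need not meet it at all, and that fails when $W=V$.

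The missing structural idea is a single generic $G\subset\mathbb{B}_0$ together with a \emph{decreasing} chain of complete subalgebras $\mathbb{B}_{n+1}\subseteq\mathbb{B}_n$. Concretely: over a CH ground model, $c$ is generic for the countable support product of $\gw_2$ copies of the $\mathbb{E}_0$-forcing, and $\mathbb{B}_n$ is generated by the values $\dot c(\ga,m)$, $m\geq n$. This forgets the first $n$ bits of every generic real simultaneously, not whole factors of the product. Each class $[c(\ga)]_{\mathbb{E}_0}$ is definable from $c_n$, so the sequence of these classes lies in $W=\bigcap_nV[c_n]$, and it is this sequence that is shown to have no selector.

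With such a chain, DC is automatic for any forcing. Let $r_{n+1}$ be the value of the $\preceq$-least $\mathbb{B}_{n+1}$-name for an element below $r_n$; every tail $\langle r_n\colon n\geq m\rangle$ is then recomputed by the same recursion inside $V[G\cap\mathbb{B}_m]$. So there is no tension between DC and BPI of the kind you anticipate. Nor is a passage to an inner model such as $L(\mathbb{R})^W$ needed; it would only oblige you to reprove BPI there.

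BPI, which you leave open behind a Halpern--L\"auchli amalgamation plan, has a short proof you are missing, and that proof is what dictates the choice of forcing. Given a Boolean algebra $C\in W$, let $F_n$ be the value of the $\preceq$-least $\mathbb{B}_n$-name for an ultrafilter on $C$. Suppose $U\in V$ is an ultrafilter on $\gw$ that still generates an ultrafilter in $V[G]$. Then $F=\{b\in C\colon\{n\colon b\in F_n\}\in U\}$ is an ultrafilter on $C$ computable from $U$ and any tail $\langle F_n\colon n\geq m\rangle$, hence $F\in W$. Under CH one takes $U$ Ramsey, and countable support products of the $\mathbb{E}_0$-forcing preserve it (Fact~\ref{ufact}). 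Your first candidate, Cohen reals, is incompatible with this route: a Cohen real splits every infinite ground model set, so no ground model ultrafilter survives.

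Finally, the no-Vitali-set argument cannot rest on the selector being definable from parameters in $V$. What one actually has is a condition $r$ and a name $\tau$ with $r\Vdash\gs/\dot c=\tau/(\dot c|n)$, so the selector depends only on the tail $c|n$. A CH $\Delta$-system argument over $\gw_2$ then yields an $\ga$ with $\supp(p_\ga)\cap\supp(r)$ equal to the heart and with uniform values $m,n$. Swap the first $n$ bits of $c(\ga)$ between two length-$n$ nodes of $p_\ga(\ga)$ that differ above $m$. This leaves $c|n$, and hence the selected point, unchanged, while changing $c(\ga)$ above $m$, which contradicts the choice of $m$. The symmetry of $\mathbb{E}_0$-trees ($r_\ga|v_0=r_\ga|v_1$) is exactly what makes both swaps compatible with a single extension $s$. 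None of these ingredients is present in your proposal.
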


\noindent The approach to the proof of Theorem~\ref{maintheorem} is quite different from the standard symmetric or permutation model constructions; it uses intersection models of \cite{z:geometric} instead. This in fact simplifies the argument considerably and brings it closer to the heart of a traditional forcing practitioner. The following is a brief description of the model.

\begin{definition}
	$\mathbb{E}_0$ is the equivalence relation on $\cantor$ connecting two binary sequences if they agree on all but finitely many positions.
\end{definition}

\noindent As is customary in descriptive set theory, I will call the $\mathbb{E}_0$ relation the \emph{Vitali equivalence relation}; a \emph{Vitali set} is a subset of $\cantor$ intersecting each $\mathbb{E}_0$-class in exactly one element.

\begin{definition}
	The \emph{$\mathbb{E}_0$}-forcing is the poset of all Borel subsets of $\cantor$ modulo the $\gs$-ideal generated by Borel partial $\mathbb{E}_0$-selectors, ordered by inclusion. Its generic object will be viewed as a function in $\cantor$.
\end{definition}

\begin{definition}
	$\mathbb{P}$ is the countable support product of $\gw_2$ many copies of the $\mathbb{E}_0$-forcing. Its generic object will be viewed as a function $c\colon \gw_2\times\gw\to 2$.
\end{definition}

\noindent Now, it is possible to describe the model for Theorem~\ref{maintheorem}. Start with a model $V$ of ZFC+CH. Let $c\colon\gw_2\times\gw\to 2$ be a function $\mathbb{P}$-generic over $V$. For each $n\in\gw$, write $c_n\colon \gw_2\times\gw\to 2$ for the function defined by $c_n(\ga, m)=c(\ga, n+m)$. Write $W=\bigcap_nV[c_n]$; then, the model $W$ satisfies ZF+DC+BPI+there are no Vitali sets.

This paper uses the standard set theoretic notation of \cite{jech:newset}. The work contained herein was partly supported by National Science Foundation grant  DMS 2348371.

\section{General theorems on intersection models}

In this section, I gather the relevant general results on intersection models. I will deal with only a certain rich subclass of the more general setup of \cite[Section 4.2]{z:geometric}.

\begin{definition}
	\label{maindefinition}
	Let $\langle \mathbb{B}_n\colon n\in\gw\rangle$ be a sequence of complete Boolean algebras such that for every $n\in\gw$, $\mathbb{B}_{n+1}$ is a complete subalgebra of $\mathbb{B}_n$. Let $G\subset\mathbb{B}_0$ be a generic filter. The \emph{associated intersection model} $M$ is the model $\bigcap_n V[G\cap\mathbb{B}_n]$.
\end{definition} 

\noindent Everywhere below, the symbols $\mathbb{B}_n$, $G$, and $M$ keep the meaning from Definition~\ref{maindefinition}. $V$ denotes the ground model.

\begin{proposition}
	\textnormal{\cite[Theorem 4.2.9]{z:geometric}} The intersection model is a transitive model of ZF.
\end{proposition}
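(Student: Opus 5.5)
Transitivity is immediate: each $V[G\cap\mathbb{B}_n]$ is a transitive class, so their intersection is transitive; it also contains all ordinals and $V$. For ZF itself, the usual route is the characterization of inner models of ZF: a transitive class containing all ordinals is a model of ZF if and only if it is closed under the G\"odel operations and almost universal, meaning every subset of it lies inside some element of it. I plan to verify these two conditions for $M$ rather than checking axioms one by one.

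\emph{Closure under G\"odel operations.} Each $V[G\cap\mathbb{B}_n]$ is a model of ZFC, so it is closed under the G\"odel operations. Each operation is absolute between transitive models, so an intersection of closed classes is again closed. Hence $M$ is closed.

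\emph{Almost universality, the main obstacle.} Given $x\subseteq M$, I need $y\in M$ with $x\subseteq y$. I expect this to be the hard step because $M$ is a priori not definable in any single $V[G\cap\mathbb{B}_n]$. The plan is to use ranks. Let $x\subseteq M$, and choose an ordinal $\alpha$ with $x\subseteq V_\alpha$, computed in $V[G]$. Put $y=M\cap V_\alpha$ and show $y\in M$.

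For each $n$, the rank initial segment $V_\alpha^{V[G\cap\mathbb{B}_m]}$ is an element of $V[G\cap\mathbb{B}_m]$. I would then show that the sequence $\langle V_\alpha^{V[G\cap\mathbb{B}_m]}\colon m\geq n\rangle$ belongs to $V[G\cap\mathbb{B}_n]$. To do this, define it in $V[G\cap\mathbb{B}_n]$ from the ground-model sequence of names and the algebras $\mathbb{B}_m$, $m\ge n$, using the filters $G\cap\mathbb{B}_m=(G\cap\mathbb{B}_n)\cap\mathbb{B}_m$. This works because $\mathbb{B}_m$ is a complete subalgebra of $\mathbb{B}_n$, and $V[G\cap\mathbb{B}_m]$ is the class of $\mathbb{B}_m$-name evaluations.

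Then $y=\bigcap_{m\ge n}V_\alpha^{V[G\cap\mathbb{B}_m]}$, since intersecting a decreasing sequence of models makes the tail irrelevant. This gives $y\in V[G\cap\mathbb{B}_n]$ for every $n$, hence $y\in M$.

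The point needing care is that $V[G\cap\mathbb{B}_m]$ is uniformly definable inside $V[G\cap\mathbb{B}_n]$ with ground-model parameters, for instance via the ground $V$ together with the generic filter. I would cite ground-model definability (Laver, Woodin) or work directly with names, as the name-based approach avoids it.

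Finally, since every class involved contains all ordinals, the characterization yields ZF in $M$.
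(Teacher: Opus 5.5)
Your proposal is correct and follows the paper's route: transitivity and closure under G\"odel operations come from the intersection, almost universality comes from showing that the rank-initial segment $M\cap V_\alpha$ lies in every $V[G\cap\mathbb{B}_n]$, and Jech's Theorem 13.9 then gives ZF. Your tail-sequence argument — that $\langle V_\alpha^{V[G\cap\mathbb{B}_m]}\colon m\geq n\rangle$ is definable in $V[G\cap\mathbb{B}_n]$ from ground-model names and the filters $(G\cap\mathbb{B}_n)\cap\mathbb{B}_m$, with $M\cap V_\alpha$ as its intersection — supplies the justification for a step the paper only asserts.
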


\begin{proof}
	The model $M$ is an intersection of countably many transitive classes containing all ordinals, so it is transitive and contains all ordinals. For the same reason, it is closed under G{\" o}del functions. It is also universal in the sense that for every subset $a\subset M$ there is a set $b\in M$ such that $a\subseteq b$ holds: this occurs because for every ordinal $\lambda$, the set $b_\lambda$ of all elements of $M$ of set theoretic rank smaller than $\lambda$ belongs to $V[G\cap\mathbb{B}_n]$, so $b_\lambda\in M$ as desired. By \cite[Theorem 13.9]{jech:newset}, $M$ is a model of ZF.
\end{proof}

\begin{proposition}
	\label{dcproposition}
	\textnormal{\cite[Electronic supplement]{z:geometric}}
	The intersection model satisfies DC.
\end{proposition}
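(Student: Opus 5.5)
To verify DC in $M$, fix in $M$ a nonempty set $X$ and a relation $R\subseteq X\times X$ with no terminal nodes, that is, for every $x\in X$ there is $y\in X$ with $x\mathrel{R} y$. The task is to find in $M$ an infinite sequence $\langle x_i\colon i\in\gw\rangle$ with $x_i\mathrel{R}x_{i+1}$ for all $i$. Such a sequence will be built in $V[G]$, and then it will be shown to lie in every model $V[G\cap\mathbb{B}_n]$. Since $\gw$-sequences are not automatically shared by the models, the sequence will be produced by a construction that is uniform across the $n$'s.

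The main tool is a uniform description of $M$. For each $n$, every element of $M$ lies in $V[G\cap\mathbb{B}_n]$, so it has a $\mathbb{B}_n$-name. Fix an ordinal $\lambda$ with $X\subseteq M\cap V_\lambda$. For each $n$ pick (in $V[G]$) a $\mathbb{B}_n$-name $\tau_n\in V$ for the set $M\cap V_\lambda$. This set belongs to every $V[G\cap\mathbb{B}_n]$, as observed in the ZF proof. The first step will be to choose, using AC in $V$, a well-ordering of all $\mathbb{B}_0$-names in $V$. Then I would pick elements canonically: each $x\in X$ gets its least $\mathbb{B}_n$-name, but the ordering of elements of $X$ this induces depends on $n$. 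The key point is that DC only requires choosing among countably many steps. At step $i$ we need $x_{i+1}$, and we need the whole sequence to be definable in each $V[G\cap\mathbb{B}_n]$.

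The plan is therefore as follows. Work in $V[G\cap\mathbb{B}_n]$, which is a ZFC model, and consider the set $T_n$ of all finite $R$-chains. By itself this does not help, since different $n$ would choose different branches. Instead I would use the tail structure. Let $X$, $R$ have names in every $\mathbb{B}_n$. I would argue by a closure/fusion trick: the sequence $\langle x_i\rangle$ need only lie in $V[G\cap\mathbb{B}_n]$ for cofinitely many $n$, since the models decrease. So I would try to build the chain in stages, with $x_i$ chosen in $V[G\cap\mathbb{B}_i]$ as the element of $X$ with the least $\mathbb{B}_i$-name that $R$-extends $x_{i-1}$. Then I must show the full sequence is in $V[G\cap\mathbb{B}_n]$ for each $n$. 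The tail $\langle x_i\colon i\ge n\rangle$ is computed from $G\cap\mathbb{B}_i\subseteq G\cap\mathbb{B}_n$ for $i\geq n$, using the sequence of names, which lies in $V$, together with the sequence $\langle G\cap\mathbb{B}_i\colon i\ge n\rangle$. The initial segment is finite and consists of elements of $M$, so it lies in $V[G\cap\mathbb{B}_n]$ as well.

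The main obstacle is that the sequence $\langle G\cap\mathbb{B}_i\colon i\geq n\rangle$ is recoverable from $G\cap\mathbb{B}_n$ given the sequence $\langle\mathbb{B}_i\rangle\in V$, which is fine. The real problem is that the choice at stage $i$ uses $X$, $R$ and the well-order of $\mathbb{B}_i$-names, which depend on $G\cap\mathbb{B}_i$ only. I must also check that "least $\mathbb{B}_i$-name" refers to a fixed well-order in $V$ of the names, so the recipe is definable in $V[G\cap\mathbb{B}_n]$ uniformly in $i\ge n$. Once that is verified, the sequence lies in each $V[G\cap\mathbb{B}_n]$, hence in $M$.
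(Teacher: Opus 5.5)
Your final plan is the paper's proof: fix in $V$ a well-ordering of sufficiently many ground-model names. At stage $i$, let $x_i$ be the value of the least $\mathbb{B}_i$-name whose value $R$-extends $x_{i-1}$. Then note that for each $n$ the tail from $n$ on is recomputed by the same recursion inside $V[G\cap\mathbb{B}_n]$, starting from the seed $x_{n-1}\in M$, while the finite initial segment consists of elements of $M$.

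One correction: the sequence of chosen names is not in $V$, since it depends on $G$. The tail is obtained by rerunning the recursion in $V[G\cap\mathbb{B}_n]$ from the ground-model well-ordering, $\langle\mathbb{B}_i\colon i\in\gw\rangle$, $X$, $R$ and $G\cap\mathbb{B}_n$, which is what your last paragraph says. The detours through names for $M\cap V_\lambda$ and the tree of finite chains are not needed.
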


\begin{proof}
	Let $\langle R, \leq\rangle$ be a partial order in the intersection model without minimal elements; I must produce an infinite descending sequence in the intersection model.

 Let $\preceq$ in $V$ be a well-ordering of a large part of the ground model. By recursion on $n\in\gw$ build a descending sequence $\langle r_n\colon n\in\gw\rangle$ of conditions in $R$ in the following way: $r_0\in R$ is arbitrary, and for each $n\in\gw$, $r_{n+1}=\gs_{n+1}/G$ where $\tau_{n+1}$ is the $\preceq$-least $\mathbb{B}_{n+1}$-name such that $\tau_{n+1}/G< r_n$. Note that for each $m\in\gw$, the sequence $\langle r_n\colon n\geq m\rangle$ can be recovered by the same recursion process in $V[G\cap\mathbb{B}_m]$ starting from $r_m$. It follows that the descending sequence $\langle r_n\colon n\in\gw\rangle$ belongs to the intersection model as required.
\end{proof}

\begin{proposition}
	The intersection model $M$ is equal to $V(a)[b]$ where
	
	\begin{enumerate}
		\item $a=\mathbb{B}_0^\gw\cap M$;
		\item $b=\{\bar p\in a\colon \forall^\infty n\in\gw\ \bar p(n)\in\mathbb{B}_n\cap G\}$.
	\end{enumerate}
\end{proposition}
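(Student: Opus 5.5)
We need to prove $M=V(a)[b]$, that is, $M$ is the least transitive model of ZF containing $V$, $a$ and $b$. The inclusion $V(a)[b]\subseteq M$ is the easy half. $V\subseteq M$ is clear. For $a$: the set $a=\mathbb{B}_0^\gw\cap M$ belongs to $M$ by the universality argument of the first proposition, since it is a definable subset of $\mathbb{B}_0^\gw\in V\subseteq M$ and $M$ is a ZF model. For $b$, I would show that it belongs to each $V[G\cap\mathbb{B}_m]$. In $V[G\cap\mathbb{B}_m]$ one can define $b$ from $a$ and $G\cap\mathbb{B}_m$, because the clause "$\forall^\infty n\ \bar p(n)\in\mathbb{B}_n\cap G$" depends only on the tail $n\geq m$. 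For $n\geq m$, $\mathbb{B}_n\cap G=\mathbb{B}_n\cap(G\cap\mathbb{B}_m)$ since $\mathbb{B}_n\subseteq\mathbb{B}_m$. Hence $b\in M$, and so $V(a)[b]\subseteq M$.

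For the reverse inclusion, I would show that every set of ordinals $x\in M$ lies in $V(a)[b]$. This suffices because both are ZF models containing $V$: any set in $M$ is coded, via a well-ordering of its transitive closure available in some $V[G\cap \mathbb{B}_n]$, by a set of ordinals from $M$.

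So fix $x\subseteq\lambda$ in $M$. For each $n$, pick a $\mathbb{B}_n$-name $\tau_n$ with $\tau_n/G=x$; by the ZFC of $V$ we may take nice names, so $\tau_n$ is essentially a function $\lambda\to\mathbb{B}_n$ with $\gb\in x$ iff $\tau_n(\gb)\in G$. For each $\gb<\lambda$, consider the sequence $\bar p_\gb=\langle \tau_n(\gb)\colon n\in\gw\rangle\in\mathbb{B}_0^\gw$. Then $\gb\in x$ iff $\bar p_\gb\in b$, provided $\bar p_\gb\in a$. Thus $x=\{\gb\colon \bar p_\gb\in b\}$, and the whole difficulty is to choose the names so that the map $\gb\mapsto\bar p_\gb$ lies in $M$, or at least in $V(a)$.

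This is the main obstacle. The choice of $\tau_n$ is made in $V$ using a well-ordering, but which names are relevant depends on $G$. My fix is canonical choice: let $\tau_n(\gb)=\|\check\gb\in\dot x_n\|$, or more robustly, inside $V[G\cap\mathbb{B}_m]$ for $n\ge m$, take the $\preceq$-least nice $\mathbb{B}_n$-name evaluating to $x$, as in the proof of Proposition~\ref{dcproposition}. The tail sequence $\langle\tau_n\colon n\geq m\rangle$ is then definable in $V[G\cap\mathbb{B}_m]$ from $x$ and a well-order $\preceq\in V$, and it is independent of $m$. So the full sequence $\langle\tau_n\rangle$ (padding finitely many entries arbitrarily, which is harmless because $b$ ignores finitely many coordinates) lies in every $V[G\cap\mathbb{B}_m]$, hence in $M$. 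Consequently each $\bar p_\gb\in a$, and the map $\gb\mapsto\bar p_\gb$ is a function from $\lambda$ into $a$ lying in $M$.

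It remains to see that this map lies in $V(a)$. I would argue that it is essentially a subset of $\lambda\times a$ determined by ground-model data: the sequence of names $\langle\tau_n\rangle$ is a single element of $\mathbb{B}_0^{\gw}$ raised to $\lambda$. To handle this, I would enlarge the perspective by observing that $\langle\tau_n(\gb)\colon n\rangle$ for $\gb<\lambda$ can be packaged as a sequence indexed by $\gw$ of functions $\lambda\to\mathbb{B}_n$. Replacing $\mathbb{B}_0$ by the algebra of such functions is not available, so instead I would check that $V(a)$ contains $\langle \bar p_\gb\colon\gb<\lambda\rangle$ because each $\bar p_\gb\in a$ and the assignment is computed in $V(a)$ from $a$ and ground parameters via the same $\preceq$-least recursion. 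If this last step resists, I would fall back on showing directly that $V(a)[b]$ computes $x$ by first recovering the sequence $\langle\tau_n\rangle$ as an element of $(\mathbb{B}_0^\lambda)^\gw\cap M$, coded coordinatewise by elements of $a$.
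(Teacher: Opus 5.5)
\textbf{There is a genuine gap: the reduction to sets of ordinals.} Your justification for it does not work. A well-ordering of $\mathrm{trcl}(\{y\})$ found in some $V[G\cap\mathbb{B}_n]$ gives a coding set of ordinals that lies in $V[G\cap\mathbb{B}_n]$, not in $M$, and different $n$ give different codes.

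If every $y\in M$ were coded by a set of ordinals belonging to $M$, every set would be well-orderable in $M$, so $M$ would satisfy AC. That fails in the very model the paper builds, which has no Vitali set. In ZF, transitive models are not determined by their sets of ordinals; there are even forcing extensions of choiceless models that add no new sets of ordinals. Concretely, an $\in$-minimal $y\in M\setminus V(a)[b]$ is a subset of some set in $V(a)[b]$. But that set need not be well-orderable there, so $y$ cannot be turned into a set of ordinals. At best your argument yields $\power(\mathrm{Ord})\cap M\subseteq V(a)[b]$.

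The missing idea is to decode all of $M$ at once, which is what the paper does:
\begin{itemize}
\item Represent each $y\in M$ by the sequence $\bar\gs$ in which $\bar\gs(n)$ is the $\preceq$-least $\mathbb{B}_n$-name for $y$.
\item In $V(a)[b]$, decide ``eventually equal'' and ``eventually a member'' between two such sequences by asking whether $\langle|\bar\gs(n)=\bar\tau(n)|_n\colon n\in\gw\rangle$ and $\langle|\bar\gs(n)\in\bar\tau(n)|_n\colon n\in\gw\rangle$ belong to $b$.
\item Recover $M$ as the Mostowski collapse, computed inside $V(a)[b]$, of this class structure.
\end{itemize}
Your identity $x=\{\gb\colon\bar p_\gb\in b\}$ is the rank-one instance of this construction.

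\textbf{The set-of-ordinals case also stalls at its last step, as you suspect.} Computing $\gb\mapsto\bar p_\gb$ in $V(a)$ ``by the same $\preceq$-least recursion'' is circular, because that recursion needs $x$ (or $G$) as input. What you need is the paper's lemma: every $\gw$-sequence $f\in M$ of ground-model objects lies in $V(a)$. The proof has three steps:
\begin{itemize}
\item In $V[G]$, the range of $f$ is covered by a ground-model set $c$ with $|c|\leq|\mathbb{B}_0|$, since there are at most $|\mathbb{B}_0|$ candidates for each value $f(n)$.
\item For a ground-model injection $e\colon c\to\mathbb{B}_0$, the composite $e\circ f$ lies in $\mathbb{B}_0^\gw\cap M=a$.
\item Hence $f=e^{-1}\circ(e\circ f)\in V(a)$.
\end{itemize}
Applied to $f=\langle\tau_n\colon n\in\gw\rangle$, this puts $\gb\mapsto\bar p_\gb$ into $V(a)$ and completes the set-of-ordinals case. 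The same lemma is what places the name sequences $\bar\gs$ above into $V(a)$, so that they belong to the class being collapsed.

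A minor point: $a$ is not a subset of the ground-model $\mathbb{B}_0^\gw$. It belongs to $M$ simply because it is $\mathbb{B}_0^\gw$ as computed in $M$.
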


\begin{proof}
	It is quite clear that $a, b\in M$, so $V(a)[b]\subseteq M$ holds. For the opposite inclusion, first show that $V(a)$ contains all $\gw$-sequences of elements of the ground model which belong to $M$. The reason is that the range of any such sequence $f$ is covered by a set $c$ of cardinality $|\mathbb{B}_0|$ in the ground model, and then $f\in c^\gw$ is in fact a composition of a ground model bijection between $c$ and $\mathbb{B}$ with some element of $\mathbb{B}^\gw\cap M$.
	
	For a formula $\phi$ of the $\mathbb{B}_n$-language in the ground model, write $|\phi|_n$ for its Boolean value. In $V(a)[b]$, consider the class $C$ of all $\gw$-sequences $\bar\gs$ such that for each $n\in\gw$, $\bar\gs(n)$ is a ground model $\mathbb{B}_n$-name, and $\langle |\bar \gs(n)=\bar\gs(n+1)|_n\colon n\in\gw\rangle\in b$ holds.
	
	Consider also the binary relations $\approx$ and $\vareps$ on $C$ defined by 
	
	\begin{itemize}
		\item $\bar\gs\approx\bar\tau$ if $\langle |\bar\gs(n)=\bar\tau(n)|_n\colon n\in\gw\rangle\in b$ holds;
		\item $\bar\gs\vareps\bar\tau$ if $\langle |\bar\gs(n)\in\bar\tau(n)|_n\colon n\in\gw\rangle\in b$ holds.
	\end{itemize}
	
	\noindent It is not difficult to see that $\approx$ is an equivalence relation ($G\subset\mathbb{B}_0$ is a filter, after all) and $\vareps$ respects the equivalence relation $\approx$ (for the same reason). Now, in $V[G]$, consider the class map $\pi$ with domain $C/\approx$ defined by $\pi([\bar\gs]_\approx)=$the eventual value of $\bar\gs(n)/G$. The definition of $\approx$ shows that the values of $\pi$ do not depend on the choice of representatives of equivalence classes. Similarly, the map $\pi$ transports $\vareps$ to $\in$. It is immediately clear from the definition of the class $C$ that the values of $\pi$ belong to the intersection model. Also, every set $x\in M_\gw$ is in the range of $\pi$. To see this, consider the function $\bar \gs$ defined by $\bar\gs(n)=$the first $\mathbb{B}_n$-name such that $\bar\gs(n)/G=x$ in some fixed well-ordering of a large part of the ground model. The sequence $\bar\gs$ belongs to $M$ since for every $n\in\gw$, $\bar\gs\restriction [n, \gw)\in V[G\cap \mathbb{B}_n]$ holds. The sequence $\bar\gs$ then belongs to $V(a)$ by the first paragraph of the present proof. So, it belongs to $C$, and $\pi(\bar\gs)=x$ by the definitions.
	
	Now, it follows that $\vareps$ is a set-like, extensional, and well-founded relation on the class $C/\approx$, since it is isomorphic to one. By the Mostowski collapse lemma applied in the model $V(a)[b]$, it has a unique transitive collapse. This collapse must then be $\pi$. So $\pi$ is a class in $V(a)[b]$ and its range $M$ is a subset of $V(a)[b]$ as desired.
\end{proof}

\noindent In the key technical contribution of this paper, I will show that ZFC style preservation properties of the algebra $\mathbb{B}_0$ imply fragments of AC in the intersection models no matter the nature of the descending sequence of complete subalgebras.

\begin{proposition}
	If the poset $\mathbb{B}_0$ is proper, then in the intersection model every set can be injected into the product of an ordinal with the quotient $\mathbb{E}_0$ space.
\end{proposition}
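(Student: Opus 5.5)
The plan is to code each element $x\in M$ by the tail of a canonical $\gw$-sequence of ground model names, and then to code that tail by an ordinal together with an $\mathbb{E}_0$-class. Work in $V[G]$ and fix in $V$ a well-ordering $\preceq$ of a large part of the ground model, as in the proofs of Proposition~\ref{dcproposition} and the preceding proposition.

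\textbf{Step 1: the canonical sequence $\bar\gs_x$.} For $x\in M$, let $\bar\gs_x(n)$ be the $\preceq$-least $\mathbb{B}_n$-name $\tau$ with $\tau/G=x$. As in the preceding proof, $\bar\gs_x\restriction[n,\gw)\in V[G\cap\mathbb{B}_n]$ for every $n$. Hence the $E_t$-class $[\bar\gs_x]_{E_t}$ belongs to every $V[G\cap\mathbb{B}_n]$, where $E_t$ is tail equivalence (eventual agreement) of $\gw$-sequences. More importantly, the map $x\mapsto[\bar\gs_x]_{E_t}$ restricted to any set $X\in M$ is definable in each $V[G\cap\mathbb{B}_n]$ from $X$ and $\preceq$, and so it belongs to $M$. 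It is injective, because $x$ is the eventual value of $\bar\gs_x(n)/G$, and this value depends only on the tail.

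\textbf{Step 2: a countable ground model cover.} Here properness of $\mathbb{B}_0$ enters. Any countable subset of $V$ lying in $V[G]$ is covered by a countable set in $V$. Fix in $V$ an enumeration $\langle d_\ga\colon\ga<\kappa\rangle$ of the countable subsets of a large enough ground model set, together with bijections $e_\ga\colon\gw\to d_\ga$; we may assume each $d_\ga$ is infinite. For a tail class $t$ of sequences of ground model elements, let $\ga(t)$ be the least $\ga$ such that some (equivalently every) representative of $t$ has a tail with range inside $d_\ga$. Properness guarantees that $\ga(t)$ exists.

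\textbf{Step 3: reduction to $\mathbb{E}_0$.} Given $t$ with $\ga=\ga(t)$, composing a tail of a representative with $e_\ga^{-1}$ yields an $E_t$-class $f(t)$ on $\gw^\gw$. Since the enumeration and the bijections are ground model objects, $t\mapsto(\ga(t),f(t))$ is injective and is again computed identically in every $V[G\cap\mathbb{B}_n]$. Finally, tail equivalence on $\gw^\gw$ is Borel reducible to $\mathbb{E}_0$; this is a classical fact, witnessed for instance by coding $y\in\gw^\gw$ as a concatenation of blocks. Fix such a Borel reduction $h$ with its code in $V$. Then $[y]_{E_t}\mapsto[h(y)]_{\mathbb{E}_0}$ is an injection of quotients, and it is absolute, so it is computed the same way in each intermediate model. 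Composing the three maps gives an injection of $X$ into $\kappa\times(\cantor/\mathbb{E}_0)$, and this injection lies in $M$.

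\textbf{Main obstacle.} I expect the delicate point to be making sure that every map used is defined by the same recipe in all $V[G\cap\mathbb{B}_n]$, so that the injection genuinely lies in $M$ even though $M$ may fail AC. In particular, Step 2 must be phrased using only the tail class, so that properness is applied inside each $V[G\cap\mathbb{B}_n]$ and not only in $V[G]$. This is fine, because $\mathbb{B}_n$ is a complete subalgebra of the proper $\mathbb{B}_0$ and so is itself proper. Alternatively, the covering can be computed in $V[G]$, with the observation that the least $\ga$ depends only on the tail class and is therefore definable in each $V[G\cap\mathbb{B}_n]$.
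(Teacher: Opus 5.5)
Your proposal is correct and follows the paper's proof essentially step by step. Both arguments take the tail class of the canonical sequence of least names, then the least ground-model countable cover of a tail given by properness, and then the class obtained by pulling back along a fixed ground-model bijection. The differences are cosmetic: you index the covers by ordinals where the paper uses a well-orderable ground-model set, and you make explicit the Borel reduction of eventual equality on $\gw^\gw$ to $\mathbb{E}_0$ on $\cantor$, which the paper leaves implicit by speaking of ``the $\mathbb{E}_0$-class in $\gw^\gw$.''
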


\begin{proof}
	Let $A\in M$ be a set. Let $\preceq$ in $V$ be a well-ordering of a large fragment of the ground model. Let $f$ be the function with domain $A$ which assigns to any element $a\in A$ the set of all sequences $\bar\gs\in M$ such that for all but finitely many $n$, $\bar\gs(n)$ is the $\preceq$-first $\mathbb{B}_n$-name in the ground model such that $\bar\gs(n)/G=a$. It is clear that this function belongs to the model $V[G\cap\mathbb{B}_n]$ for every $n\in\gw$, so it belongs to the intersection model.
	
	Now, the properness of $\mathbb{B}_0$ implies that in $V[G]$, every countable subset of the ground model is a subset of a ground model set which is countable there. Thus, in $M$ one can form a function $g$ with domain $A$ such that for every $a\in A$, $g(a)$ is the $\preceq$-first set countable in $V$ which contains a tail of values of the sequences in $f(a)$. Now, let $h$ be the function with domain $A$ which assigns to each $a\in A$ the pair $\langle g(a), u\rangle$ where $u$ is the $\mathbb{E}_0$-class in $\gw^\gw$ which is mapped to the set $g(a)\cap h(a)^\gw$ by the $\preceq$-first bijection between $\gw$ and $g(a)$. It is immediate that $h$ is an injection of $A$ into the product of a subset of the ground model (which is well-orderable) and the $\mathbb{E}_0$ space.
\end{proof}

\begin{proposition}
	If the poset $\mathbb{B}_0$ is proper and bounding, then in the intersection model, every set of nonempty sets admits a multi-selector with countable values.
\end{proposition}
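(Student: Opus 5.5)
Given a set $A\in M$ of nonempty sets, the plan is to define in $M$ a function $s$ with domain $A$ such that each $s(x)$ is a countable nonempty subset of $x$. The idea is to pick names canonically along a tail, as in the proof of the DC proposition, and then use properness and bounding to collapse the resulting sequences to countable information that is uniform across all tails. Fix in $V$ a well-ordering $\preceq$ of a large fragment of the ground model.

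For $x\in A$, working in $V[G\cap\mathbb{B}_n]$, let $\tau_n(x)$ be the $\preceq$-first $\mathbb{B}_n$-name whose evaluation lies in $x$; note that $x\in V[G\cap\mathbb{B}_n]$. The sequence $\langle\tau_n(x)\colon n\geq m\rangle$ is computable in $V[G\cap\mathbb{B}_m]$ from $x$, so $\langle \tau_n(x)\colon n\in\gw\rangle$, modulo finite modifications, is a member of $M$. The candidate selector is the set of values $\{\tau_n(x)/G\colon n\in\gw\}$, which is a countable subset of $x$. This set lies in $M$, since removing finitely many elements leaves a set in $V[G\cap\mathbb{B}_m]$, but the finitely many removed elements are missing there. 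Recovering them fails in general, because $\tau_n(x)/G$ for small $n$ need not be definable from $G\cap\mathbb{B}_m$. So we use instead the set $s(x)$ of all evaluations $\sigma/G$ where $\sigma$ ranges over a countable ground-model set of names containing a tail of $\langle\tau_n(x)\rangle$ and $\sigma/G\in x$. This must be made definable in each $V[G\cap\mathbb{B}_m]$, which is where the main obstacle lies.

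To handle the evaluation problem, replace names by evaluations that are robust across the tower. By properness, in $V[G]$ the countable set $\{\tau_n(x)\}$ is covered by a ground-model countable set $D$; take $D$ $\preceq$-least, as in the preceding proposition, so that $D$ depends only on the $\mathbb{E}_0$-type tail and is definable in every $V[G\cap\mathbb{B}_m]$. We then want $s(x)=\{y\in x\colon \exists\sigma\in D\ \exists m\ y$ is the value in $V[G\cap\mathbb{B}_m]$ of $\sigma$ read as a $\mathbb{B}_m$-name$\}$. Here bounding enters: each $\sigma\in D$ is a $\mathbb{B}_n$-name for a single $n$, and a name for an element of $M$ is only useful if it is simultaneously a $\mathbb{B}_k$-name up to equivalence for all $k$. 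We would aim to show, using the $\gw^\gw$-bounding property to dominate the function $k\mapsto$ (index of a $\mathbb{B}_k$-name in a fixed countable enumeration agreeing with $\tau_n$), that there is a ground-model countable family of sequences $\bar\sigma$ of names satisfying $\bar\sigma(k)\in\mathbb{B}_k$-names and $|\bar\sigma(k)=\bar\sigma(k+1)|_k\in G$ eventually, exactly the class $C$ from the proposition on $V(a)[b]$. The eventual values $\pi([\bar\sigma])$ of these sequences are tail-invariant, hence computable in each $V[G\cap\mathbb{B}_m]$.

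Finally, let $s(x)$ be the set of $\pi([\bar\sigma])\in x$ for $\bar\sigma$ in the $\preceq$-least countable ground-model family $F$ of coherent sequences that contains a representative of $x$'s canonical sequence $\langle\tau_n(x)\rangle$. The set $s(x)$ is countable because $F$ is, and it is nonempty because the canonical sequence contributes $\tau_n(x)/G\in x$. The map $x\mapsto s(x)$ is defined identically in every $V[G\cap\mathbb{B}_m]$, so it belongs to $M$. The hard step is the previous paragraph: showing that bounding yields a ground-model countable family of coherent name-sequences that captures the canonical choice. I would first try a fusion argument below a condition, dominating the "coherence index" function by a ground-model function.
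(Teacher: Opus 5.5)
There is a genuine gap, and it sits exactly at the step you flag as hard: the object you want there need not exist. You ask for a countable ground-model family $F$ of name sequences containing some $\bar\gs$ that is coherent (in the class $C$) with eventual value in $x$. Even a single ground-model $\bar\gs$ with $\bar\gs(n)/G$ eventually constant and in $x$ can fail to exist.

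For a counterexample, take the paper's own setting, where $\mathbb{B}_0$ is the completion of a countable support product of $\mathbb{E}_0$-forcings (proper and bounding), and let $x=[c(\ga)]_{\mathbb{E}_0}\in M$. Suppose $p$ forces that $\bar\gs(n)/\dot G$ agrees with $\dot c(\ga)$ past $k$ for all $n\geq N$. Strengthen $p$ so that the stem of $p(\ga)$ is a splitnode of length $j>k$. Flipping the $j$-th entry of the $\ga$-th coordinate is then an automorphism of $\mathbb{P}$ that fixes $p$ and fixes $\mathbb{B}_n$ pointwise for $n>j$. For $n>\max(j,N)$ it therefore fixes the $\mathbb{B}_n$-name $\bar\gs(n)$ while flipping $\dot c(\ga)(j)$. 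So $p$ would force $\bar\gs(n)/\dot G$ to take both values at $j$, which is impossible.

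This also shows why bounding cannot help in the way you intend. Dominating your index function only traps the $V[G]$-sequence of names inside a ground-model sequence of \emph{finite sets}. Capturing it modulo finite by a member of a countable ground-model family would essentially put it in $V$. Separately, your canonical sequence $\langle\tau_n(x)\colon n\in\gw\rangle$ is not in $C$ in general, since its values can be pairwise distinct elements of $x$. So ``a representative of it'' is not meaningful.

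The missing idea is to stop at the finite-set level and collect everything caught there, as the paper does. Properness plus bounding gives the following: in $V[G]$, every $\gw$-sequence $\bar s$ of ground-model objects is covered by a ground-model $\bar t$ with finite values, meaning $\bar s(n)\in\bar t(n)$ for all $n$. Apply this to $\bar s(n)=$ a $\mathbb{B}_n$-name for one fixed $y\in x$; this $\bar s$ lives only in $V[G]$. Let $f(x)$ be the $\preceq$-first ground-model $\bar t$ with finite values such that some $y\in x$ has a $\mathbb{B}_n$-name in $\bar t(n)$ for all but finitely many $n$. Let $g(x)$ be the set of \emph{all} such $y\in x$.

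Both definitions look only at tails, so $x\mapsto g(x)$ lies in $M$, and $g(x)$ is nonempty by the choice of $f(x)$. For countability, let $\bar u(n)$ be the set of those $y$ with a $\mathbb{B}_m$-name in $f(x)(m)$ for every $m\geq n$. Then $g(x)$ is the increasing union of the $\bar u(n)$, each $|\bar u(n)|\leq|f(x)(n)|$ is finite, and $\bar u\in M$. Countability of $g(x)$ follows from DC in $M$ applied to this increasing union of finite sets, not from any countable ground-model family of name sequences.
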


\begin{proof}
	First use the proper and bounding assumption to show that in the model $V[G]$, for every countable sequence $\bar s$ of ground model sets there is a ground model sequence $\bar t$ of finite sets such that $\forall n\ \bar s(n)\in\bar t(n)$.
	
	Now, suppose that $A\in M$ is a set of nonempty sets. Consider the function $f$ with domain $A$ such that for every $a\in A$, the functional value $f(a)$ is a sequence $\bar t$ in the ground model which takes only finite values, such that for all but finitely many $n\in\gw$, $\bar t(n)$ contains a $\mathbb{B}_n$-name $\gs$ such that $\gs/G\in A$, and such a sequence $\bar t$ is selected first in some fixed well-ordering of a large part of the ground model.
	
	Observe that the function $f$ is well-defined. If $a\in A$ is any element and $x\in a$ is arbitrary, in $V[G]$ one can find a sequence $\bar s$ such that for every $n\in\gw$, $\bar s(n)$ is a $\mathbb{B}_n$-name in the ground model such that $\bar s(n)/G=x$. Then one can approximate $\bar s$ with $\bar t$ in the ground model as indicated in the first paragraph of the present proof, showing that the definition of $f$ minimizes over a nonempty set.
	
	Now, let $g$ be the function on $A$ defined by $g(a)=\{x\in a\colon$ for all but finitely many $n\in\gw$ there is an element $\gs\in f(a)(n)$ which is a $\mathbb{B}_n$-name and $\gs/G=x\}$. It is immediate that the function $g$ is defined in the same way in every model $V[G\cap\mathbb{B}_n]$, so it belongs to the intersection model. By the definition of $f$, for each $a\in A$, $g(a)\subset a$ is a nonempty set. I must argue that the set $g(a)$ is countable in the intersection model.
	
	In order to do that, fix the set $a\in A$. Consider the sequence $\bar u$ defined by $\bar u(n)=\{x\in a\colon$ for every $m\geq n$ there is $\gs\in f(a)(n)$ which is a ground model $\mathbb{B}_n$-name and $\gs/B_n=x\}$. The sequence $\bar u$ consists of finite subsets of $g(a)$, and $g(a)$ is the increasing union $\bigcup_n\bar u(n)$. In addition, for each $n\in\gw$ the tail $\bar u\restriction [n, \gw)$ belongs to $V[G\cap\mathbb{B}_n]$, so $\bar u$ belongs to the intersection model. It follows that in $M$, $g(a)$ is an increasing union of finite sets, and by DC in $M$ it follows that such sets are countable.
\end{proof}

\begin{proposition}
	\label{bpiproposition}
 Suppose that $\mathbb{B}_0$ preserves a nonprincipal ultrafilter on $\gw$. Then BPI holds in the intersection model.	
\end{proposition}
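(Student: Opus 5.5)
Given a Boolean algebra $Q\in M$, I will build an ultrafilter on $Q$ that lies in $M$. The idea is to take the limit along a ground model ultrafilter of ultrafilters chosen in the models $V[G\cap\mathbb{B}_n]$.

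Fix in $V$ a nonprincipal ultrafilter $U$ on $\gw$ that $\mathbb{B}_0$ preserves, meaning $U$ still generates an ultrafilter in $V[G]$. Fix also a well-ordering $\preceq$ of a large part of $V$. Since every model $V[G\cap\mathbb{B}_n]$ satisfies AC, I choose canonically in it an ultrafilter on $Q$: let $\tau_n$ be the $\preceq$-first $\mathbb{B}_n$-name for an ultrafilter on the (name for the) algebra $Q$, and put $F_n=\tau_n/G$. To make this canonical, I first fix names for $Q$ as in the proof of the DC proposition; for each $n$, the name for $Q$ is the $\preceq$-first $\mathbb{B}_n$-name evaluating to $Q$. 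The sequence $\langle F_n\colon n\geq m\rangle$ is then definable in $V[G\cap\mathbb{B}_m]$ from parameters available there. The key point is that $\langle F_n\colon n\in\gw\rangle$ itself need not lie in $M$, but each tail lies in $V[G\cap\mathbb{B}_m]$.

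Now define $F=\{q\in Q\colon \{n\colon q\in F_n\}\in U\}$, computed in $V[G]$ using the ultrafilter generated by $U$. Because $U$ generates an ultrafilter in $V[G]$, $F$ is an ultrafilter on $Q$ by the usual limit argument. The checks are that each $F_n$ is an ultrafilter and that the set of $n$ with $q\in F_n$ or with $-q\in F_n$ splits into complementary sets, one of which lies in the generated ultrafilter. Membership $q\in F$ depends only on a tail of the sequence, because $U$ is nonprincipal. Moreover, for $X\subseteq\gw$ in $V[G\cap\mathbb{B}_m]$, membership of $X$ in the filter generated by $U$ is decided the same way in $V[G\cap\mathbb{B}_m]$ and in $V[G]$: $U$ generates an ultrafilter in $V[G]$, hence also in the intermediate model, and these generated filters are compatible. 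Hence $F$ is definable in $V[G\cap\mathbb{B}_m]$ from the tail $\langle F_n\colon n\ge m\rangle$ and $U$, so $F\in V[G\cap\mathbb{B}_m]$ for every $m$, and therefore $F\in M$.

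I expect the main obstacle to be exactly this absoluteness: the $U$-limit computed in $V[G\cap\mathbb{B}_m]$ must agree with the one computed in $V[G]$. This needs "$U$ generates an ultrafilter" in the intermediate models, which follows because $\mathbb{B}_m$ is a complete subalgebra of $\mathbb{B}_0$. A second delicate point is the canonical choice of names, so that the tail is genuinely definable in $V[G\cap\mathbb{B}_m]$. This is handled by referring only to $\mathbb{B}_n$-names, for $n\ge m$, in the ground model well-ordering, which $V[G\cap\mathbb{B}_m]$ can evaluate, since $\mathbb{B}_n\subseteq\mathbb{B}_m$.
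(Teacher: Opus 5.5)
Your proposal is correct and follows essentially the same route as the paper. You take $F_n=\tau_n/G$ for the $\preceq$-first $\mathbb{B}_n$-name $\tau_n$ for an ultrafilter on $Q$, so that each tail $\langle F_n\colon n\geq m\rangle$ lies in $V[G\cap\mathbb{B}_m]$, and then form the limit of the $F_n$ along $U$, which is computed the same way in every $V[G\cap\mathbb{B}_m]$. Your explicit check of this point, that the filter generated by $U$ treats subsets of $\omega$ in $V[G\cap\mathbb{B}_m]$ as it does in $V[G]$ and that nonprincipality makes the limit depend only on tails, spells out what the paper leaves implicit; your extra step of fixing canonical names for $Q$ is harmless but not needed.
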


\noindent Note that the proposition says that if the largest generic extension is not far from the ground model, all intermediate intersection models satisfy BPI. This observation together with Proposition~\ref{dcproposition} provides an endless supply of heretofore unknown models of ZF+DC+BPI.

\begin{proof}
	Let $C$ be a Boolean algebra in the intersection model; I must find an ultrafilter on $C$ in the intersection model. As in Proposition~\ref{dcproposition}, I construct  a sequence $\langle F_n\colon n\in\gw\rangle$ of ultrafilters on $C$ such that for every $m\in\gw$, $\langle F_n\colon n\geq m\rangle\in V[G\cap\mathbb{B}_m]$ holds.
	
	This is easy to do. Let $\preceq$ in $V$ be a well-ordering of a large part of the ground model. For each $n\in\gw$, write $F_n=\tau_n/G$ for the $\preceq$-first $\mathbb{B}_n$-name $\tau_n$ such that $\tau_n/G$ is an ultrafilter on $C$. Note that the sequence $\langle F_n\colon n\in\gw\rangle$ is as required.
	
	Now, let $U\in V$ be a nonprincipal ultrafilter on $\gw$ which generates an ultrafilter in $V[G]$; by an abuse of notation, let $U$ also denote the ultrafilter generated by $U$ in $V[G\cap\mathbb{B}_n]$ for $n\in\gw$. Let $F=\{b\in B\colon\{n\in\gw\colon b\in F_n\}\in U\}$. Clearly, this is an ultrafilter on $C$; I must prove that for every $m\in\gw$, $F\in V[G\cap\mathbb{B}_m]$. This is immediate though. $F$ is recovered in $V[G\cap\mathbb{B}_m]$ as $F=\{b\in B\colon\{n\geq m\colon b\in F_n\}\in U\}$, using the fact that both $U$ and $\langle F_n\colon n\geq m\rangle$ belong to $V[G\cap\mathbb{B}_m]$.
\end{proof}

\section{General theorems on $\mathbb{E}_0$ forcing}

In this section, I recall the basic facts about the generic objects used to form the intersection model for Theorem~\ref{maintheorem}. None of them are new.

The $\mathbb{E}_0$-forcing has appeared in the literature many times \cite[Section 4.7.1]{z:book2}. In this section I describe its well-known combinatorial presentation and state an ultrafilter preservation theorem needed for the final proof.

\begin{definition}
	An $\mathbb{E}_0$-tree is an ever-branching tree $T\subset 2^{<\gw}$ such that for every splitnode $t\in T$ and every $u\in T$ extending $t$, the node obtaining from $u$ by flipping its $|t|$-th entry is in $T$ as well.
\end{definition}

\noindent Let $I$ be the $\gs$-ideal on $\cantor$ $\gs$-generated by Borel partial $\mathbb{E}_0$-selectors. The (proof of the) Glimm-Effros dichotomy of \cite{hkl:glimmeffros} immediately implies that a Borel subset of $\cantor$ is $I$-positive if and only if it contains all branches of some $\mathbb{E}_0$-tree. Thus, the poset of Borel sets ordered by inclusion modulo $I$ is in the forcing sense equivalent to the poset of $\mathbb{E}_0$-trees ordered by inclusion, and this is the combinatorial presentation of the poset I use below.

\begin{fact}
	\label{ufact}
Let $U$ be a Ramsey ultrafilter on $\gw$. Let $\mathbb{P}$ be a countable support product of any number of $\mathbb{E}_0$-forcings. In the $\mathbb{P}$-extension, $U$ still generates an ultrafilter.	
\end{fact}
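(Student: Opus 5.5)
The plan is the standard route for showing that a countable support product of tree forcings preserves a P-point/Ramsey ultrafilter: a fusion argument combined with Ramsey/selective properties of $U$. It suffices to prove that for every $\mathbb{P}$-name $\dot A$ for a subset of $\gw$ and every condition $p\in\mathbb{P}$ there are a stronger condition $q\leq p$ and a set $B\in U$ such that either $q\Vdash \check B\subseteq\dot A$ or $q\Vdash \check B\cap\dot A=\emptyset$. This shows that $U$ generates an ultrafilter in the extension; the generated filter is proper since its sets are ground model sets.

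\textbf{Step 1 (finite decision).} Work with conditions $p$ whose support $\supp(p)$ is countable and whose coordinates are $\mathbb{E}_0$-trees. Using the standard fusion apparatus for countable support products (enumerate the support, with finite sets $a_k\subseteq\supp(p)$ and split levels $k$), the key finite lemma reads as follows. Given $p$, a finite set $a\subseteq\supp(p)$, a number $k$ and $n\in\gw$, there is $p'\leq p$ that agrees with $p$ up to the $k$-th splitting level on coordinates in $a$ and decides ``$n\in\dot A$'' modulo the finitely many nodes on those levels. Because every $\mathbb{E}_0$-tree splits in a homogeneous way, the finitely many nodes at a given splitting level of a coordinate are obtained from each other by flips, and the restricted trees above them are isomorphic (flipping bits maps $T\restriction t$ onto $T\restriction t'$). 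This makes the amalgamation across nodes possible, exactly as in Sacks-type fusion, keeping the trees $\mathbb{E}_0$-trees.

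\textbf{Step 2 (Ramsey homogenization).} Run a fusion sequence $p_0\geq p_1\geq\dots$. At stage $n$, let $p_{n+1}$ decide ``$n\in\dot A$'' for each of the finitely many ``positions'' $s$ (tuples of nodes at current split levels in the finitely many tracked coordinates). This yields a function $n\mapsto$ (a finite table of decisions). The number of positions grows with $n$, so the colorings are not finite-valued uniformly. Here I would use the fact that a Ramsey ultrafilter is a P-point and $q$-point. By passing to a set in $U$ and thinning the fusion (delaying splitting so that at most one new split level is introduced between consecutive elements of the chosen set), one reduces to a coloring of pairs or finite sets into finitely many colors. This is handled by selectivity: the decision for $n$ depends on the branch choices made at levels before $n$, producing a tree-shaped partition to which one applies the Galvin–Prikry/Mathias-type property of Ramsey ultrafilters (every analytic partition of $[\gw]^\gw$ has a homogeneous set in $U$, or more elementarily, the game characterization: player I has no winning strategy in the game where II picks elements of sets proposed by I).

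\textbf{Step 3 (conclusion).} With homogeneity obtained, prune the fusion limit $q$ so that along every generic branch combination the decisions on the chosen $B\in U$ are constant: all ``yes'' or all ``no''. Then $q$ forces $B\subseteq\dot A$ or $B\cap\dot A=\emptyset$.

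The main obstacle is Step 2: matching the growing finite information of the fusion to the homogeneity principle of $U$. I would first try the game formulation of selectivity. Player II wins if the set of chosen integers is in $U$, and the fusion construction is built as a strategy-responding object. Failing that, I would cite the known result that countable support products of $\mathbb{E}_0$-forcings (as idealized forcings with the appropriate rectangular/Ramsey-type property, cf.\ \cite[Section 4.7.1]{z:book2}) preserve Ramsey ultrafilters.
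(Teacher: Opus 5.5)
\textbf{There is a genuine gap, in Steps 2--3, and they carry the entire content of the statement.} Step 3 asserts that the fusion limit can be pruned to a condition $q$, with some $B\in U$, such that all decisions of ``$n\in\dot A$'' for $n\in B$ are the same. Nothing you establish supports this.

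The Ramsey-type properties of $U$ you invoke (selectivity, P-point and Q-point, Galvin--Prikry/Mathias, the selectivity game) act on subsets of $\omega$. They help only once you know that the sets $a$ which some condition decides homogeneously, i.e.\ $r\Vdash\check a\subseteq\dot A$ or $r\Vdash\check a\cap\dot A=\emptyset$, are abundant. That is a property of the forcing, not of $U$. Your promised reduction to ``a coloring of pairs or finite sets into finitely many colors'' does not address it, since what must be found is a subcondition, not a homogeneous set for a partition of $\omega$.

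In fact everything your sketch uses holds verbatim for Silver forcing: fusion, finitely many flip-isomorphic cones at each splitting level, and amalgamation across them. There the conclusion is false. The parity real $\{n\colon |\{i<n\colon x(i)=1\}|\text{ is even}\}$ splits every infinite ground model set $B$. Indeed, take $b<b'$ in $B$ with a free coordinate $k\in[b,b')$, fix the other free coordinates below $b'$, and choose $x(k)$ so that $b$ and $b'$ get opposite parities. So no ground model ultrafilter generates an ultrafilter in a Silver extension, and an argument built only from your ingredients would prove a false statement.

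A correct argument must use something that separates the $\mathbb{E}_0$-forcing from Silver forcing. For instance, the set of branches of a Silver tree along which consecutive pairs of free coordinates are flipped together is still $I$-positive, and on it this parity is decided after each pair.

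The missing ingredient, which the paper supplies, is that the product $\mathbb{Q}$ of countably many $\mathbb{E}_0$-forcings adds no independent reals \cite[Example 1.10]{z:subadditive}. The paper's argument then runs as follows.
\begin{enumerate}
\item A fusion gives $q\leq p$ and a countable set $J$ of coordinates such that below $q$, $\dot A$ is a name for the product $\mathbb{Q}$ of the $\mathbb{E}_0$-forcings indexed by $J$.
\item Since $\mathbb{Q}$ adds no independent reals, the set $\mathcal{D}=\{a\subseteq\omega\colon\exists r\leq q\ (r\Vdash\check a\subseteq\dot A\text{ or }r\Vdash\check a\cap\dot A=\emptyset)\}$ is dense in $\mathcal{P}(\omega)$ modulo finite.
\item Continuous reading of names makes $\mathcal{D}$ analytic.
\item Mathias's theorem yields $B\in U$ with $[B]^\omega\subseteq\mathcal{D}$ or $[B]^\omega\cap\mathcal{D}=\emptyset$. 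Density excludes the second alternative, so $B\in\mathcal{D}$.
\end{enumerate}
Your appeal to Galvin--Prikry/Mathias is the right final tool. Without the density that comes from the forcing, though, you cannot rule out landing on the wrong side of that dichotomy. Finally, your fallback of citing the known preservation of Ramsey ultrafilters by these products is just citing the statement to be proved.
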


\begin{proof}
	This is a concatenation of several arguments well-known from pre-existing literature; I will only provide an outline of the proof. Suppose that $p\in\mathbb{P}$ is a condition and $p\Vdash\gs\subset\gw$ is a set. I must show that there is a condition $r\leq p$ and a set $a\in U$ such that $r\Vdash\check a\subseteq\gs$ or $r\Vdash\check a\cap\gs=0$. 
	
	Let $I$ denote the index set for the product $\mathbb{P}$. A standard fusion argument provides a strengthening $q\leq p$ and a countable set $J\subset I$ such that below $q$, $\gs$ is in fact a name in the product $\mathbb{Q}$ of $J$-many copies of the $\mathbb{E}_0$-forcing. The poset $\mathbb{Q}$ adds no independent reals by \cite[Example 1.10]{z:subadditive}; so the set $A=\{a\subset\gw\colon\exists r\leq q\ r\Vdash\check a\subseteq\gs$ or $r\Vdash\check a\cap\gs=0\}$ is dense in the algebra $\power(\gw)$ modulo finite. Use the continuous reading of names to strengthen $q$ if necessary to make sure that the set $A\subset\power(\gw)$ is analytic. Every Ramsey ultrafilter has a nonempty intersection with every analytic set dense in $\power(\gw)$ modulo finite by the arguments of \cite{mathias:happy}. This provides the desired condition $r$ and the set $a\in U$. 
\end{proof}

\section{Conclusion of the argument}

At this point, everything is ready to conclude the proof of Theorem~\ref{maintheorem}. Suppose that the Continuum Hypothesis holds. Then there is a Ramsey ultrafilter on $\gw$, pick one and call it $U$. Let $\mathbb{P}$ be a countable support product of $\gw_2$-many copies of $\mathbb{E}_0$-forcing. Let $\dot c$ be the $\mathbb{P}$-name for the generic object, viewed as a map from $\gw_2\times\gw$ to $2$. Let $\mathbb{B}_0$ be the Boolean completion of $\mathbb{P}$, and for each $n\in\gw$ let $\mathbb{B}_n$ be the complete subalgebra of $\mathbb{B}_0$ generated by the names $\dot c(\check\ga, \check m)$ for $m\geq n$.

Starting from a model of ZFC+CH, let $c\colon\gw_2\times\gw\to 2$ be a $\mathbb{P}$-generic function. For notational convenience, for every ordinal $\ga$ write $c(\ga)\in\cantor$ for the $\ga$-th section of the map $c$, and for every $n\in\gw$ write $c_n$ for the function from $\gw_2\times\gw$ to $2$ defined by $c_n(\ga, m)=c(\ga, n+m)$. It is immediate that the function $c_n$ is just a shift of the $\mathbb{B}_n$-generic object. The intersection model satisfies ZF+DC by Proposition~\ref{dcproposition}. The ultrafilter $U$ generates an ultrafilter in $V[c]$ by Fact~\ref{ufact}. Proposition~\ref{bpiproposition} then shows that the intersection model satisfies BPI.

It remains to show that there is no Vitali set in the intersection model. In fact, observe that the sequence $\langle [c(\ga)]_{\mathbb{E}_0}\colon \ga\in\gw_2\rangle$ belongs to the intersection model; I will show that there is no selector on this sequence of sets. This is a consequence of a rather standard $\Delta$-system argument in the ground model. To state it, I need to be able to move freely between the generic function and its fragments on the coherent sequence. The following definition is designed to facilitate such moves.

\begin{definition}
	Suppose that $n\in\gw$ is a number. 
	
	\begin{enumerate}
		\item For a function $c\colon \gw_2\times\gw\to 2$, the function $c|n\colon\gw_2\times\gw\to 2$ is defined by $(c|n)(\ga, m)=c(\ga, n+m)$.
		\item For a condition $p\in\mathbb{P}$ and a function $v\colon\gw\to 2^n$ such that for every $\ga\in\gw_2$, $v(\ga)\in p(\ga)$ holds, write $p|v$ for the condition defined by the demand $w\in (p|v)(\ga)$ iff $v(\ga)^\smallfrown w\in p(\ga)$.
		\item For a condition $p\in\mathbb{P}$ and a function $v\colon\supp(p)\to\ga$, write $p||v$ for the condition in $\mathbb{P}$ with the same domain defined by the demand $w\in (p||v)(\ga)$ iff $w$ is an initial segment of $v(\ga)$ or there is $u\in p(\ga)$ such that $w=v(\ga)^u$.
	\end{enumerate}
\end{definition}

\begin{proposition}
	Let $n\in\gw$ be a number. The poset $\mathbb{P}$ forces the function $\dot c|n$ to be $\mathbb{P}$-generic over the ground model.
\end{proposition}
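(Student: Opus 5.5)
We prove that for every condition $p\in\mathbb{P}$ and every dense set $D\subseteq\mathbb{P}$ in the ground model, there is $q\leq p$ forcing $\dot c|n$ to meet $D$. The natural tool is to decompose the generic $c$ into its first $n$ columns and the shifted remainder. Write $c\restriction(\gw_2\times n)$ as a function $v\colon\gw_2\to 2^n$, given by $v(\ga)=c(\ga)\restriction n$. Then $c$ is determined by the pair $(v, c|n)$, and the condition $p|v$ from the definition above is exactly the ``shifted'' version of $p$ relative to the stem choice $v$. The key observation is that $v$ has countable support in the relevant sense: below any condition only the countably many coordinates in $\supp(p)$ are nontrivial. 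In fact $v(\ga)$ ranges over finitely many values in each of these coordinates, though there may be countably many such coordinates.

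The plan has three steps. First, we verify that $p\mapsto p|v$ sends conditions to conditions. This holds because each $\mathbb{E}_0$-tree restricted above a node is again an $\mathbb{E}_0$-tree: its splitnodes above the node are still splitnodes, and the flipping closure is preserved after removing an initial segment of length $n$. We also verify that it is order preserving.

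Second, given $p$ and a dense $D$, we strengthen $p$ to $p'$ so that every coordinate tree $p'(\ga)$ for $\ga\in\supp(p')$ has all nodes of length $n$ compatible in a strong sense. Concretely, we make sure that for every $\ga$ the first $n$ levels of $p'(\ga)$ are either a single stem or full $\mathbb{E}_0$-splitting. In an $\mathbb{E}_0$-tree, for any two nodes $u,u'$ of the same length $n$ past all splitnodes below $n$, the cones $T_u$ and $T_{u'}$ are \emph{identical} after shifting by the flip property, because flipping coordinates below $n$ is an automorphism of the tree. This is the crucial feature of $\mathbb{E}_0$-trees: $p(\ga)|u$ does not depend on the choice of $u\in p(\ga)\cap 2^n$. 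Hence $p|v$ is the same condition $\bar p$ for all admissible $v$. Now pick $r\in D$ with $r\leq\bar p$ and lift it back: let $q=p||w$ for any fixed admissible $w$. More honestly, we let $q(\ga)=\{$initial segments of $u\}\cup\{u^\smallfrown s\colon u\in p(\ga)\cap 2^n, s\in r(\ga)\}$, which is again an $\mathbb{E}_0$-tree contained in $p(\ga)$.

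Third, we check that $q$ forces $\dot c|n\in$ the filter generated by $r$. This holds because every branch through $q(\ga)$ passes through some $u$ of length $n$ followed by a branch through $r(\ga)$. Therefore $q\Vdash \dot c|n$ meets $D$, and genericity of $\dot c|n$ follows. We also note that $\dot c|n$ respects the countable support structure, so the meeting of $D$ makes sense within $\mathbb{P}$.

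The main obstacle is the second step, specifically ensuring that the relevant trees are independent of the length-$n$ prefix. This independence must hold simultaneously on countably many coordinates. It is not true for an arbitrary $\mathbb{E}_0$-tree below its first splitnode when the stem is longer than $n$. That case, however, is harmless: the prefix is then unique, and no choice arises. When splitnodes occur below $n$, the flip invariance in the definition of an $\mathbb{E}_0$-tree gives exactly the needed homogeneity. I would verify this carefully, including that the combined tree $q(\ga)$ retains the flip property at splitnodes both below and above level $n$.
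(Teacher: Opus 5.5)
Your argument is correct and is essentially the paper's: the shift $p\mapsto p|v$ works as a projection, and your lifting of $r\le p|v$ to a $q\le p$ with $q(\alpha)|u=r(\alpha)$ for every $u\in q(\alpha)\cap 2^n$ is exactly the check the paper leaves as ``not difficult to check.'' The one difference is that the paper first passes to the dense set of conditions whose stems on the support are longer than $n$, so that $v$ is unique, whereas you use the flip-homogeneity of $\mathbb{E}_0$-trees (the same symmetry the paper later invokes for $r_\alpha|v_0=r_\alpha|v_1$). That homogeneity holds for every $\mathbb{E}_0$-tree, because any two nodes of $T\cap 2^n$ are joined by successive flips at their first point of disagreement, which is always a splitnode level. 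So your preliminary strengthening to ``single stem or full splitting'' is unnecessary, and the full-splitting alternative could not in general be obtained by shrinking a tree anyway.
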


\begin{proof}
	Consider the set $D$ of all conditions $p$ such that for every $\ga\in\supp(p)$, the stem of $p(\ga)$ is longer than $n$. It is obvious that the set $D\subset P$ is dense. Let $\pi\colon D\to\mathbb{P}$ be the function defined by $\pi(p)=p|v$ for some (any) function $v\colon \gw_2\to 2^n$ such that for every $\ga\in\supp(p)$, $v(\ga)\in p(\ga)$. It is not difficult to check that $\mathbb{P}$ forces the $\pi$-image of $D\cap\dot G$ to be a generic filter on $\mathbb{P}$ which yields the function $\dot c|n$.
\end{proof}

I need to show that there is no Vitali set in the intersection model. Suppose towards a contradiction that some condition $p\in\mathbb{P}$ forces $\gs\subset\cantor$ to be a set in the intersection model with singleton intersection with all the sets $[\dot c(\ga)]_{\mathbb{E}_0}$ for $\ga\in\gw_2$. For each ordinal $\ga\in\gw_2$, there must be a condition $p_\ga\leq p$ and a number $m_\ga\in\gw$ such that $p_\ga$ forces the unique element of $\gs$ in the $\mathbb{E}_0$-class of $\dot c(\ga)$ to agree with $\dot c(\ga)$ past $\check m_\ga$. There must be a number $n_\ga\in\gw$ such that the tree $p_\ga(\ga)$ contains two splitnodes at the level $n_\ga$ which differ at some entry larger than $m_\ga$. By the CH assumption, there must be a cofinal set $A\subset\gw_2$ such that the numbers $m_\ga$ for all $\ga\in A$ coincide and are equal to some $m$, the numbers $n_\ga$ for all $\ga\in A$ coincide and are equal to some $n$, the sets $\dom(p_\ga)$ for $\ga\in A$ form a $\Delta$-system with heart $h\subset\gw$, and the conditions $p_\ga\restriction h$ all coincide and are equal to some $q\in\mathbb{P}$.

Let $r\leq q$ be a condition and $\tau$ be a $\mathbb{P}$-name such that $r\Vdash\gs/\dot c=\tau/\dot c|\check n$. Choose an ordinal $\ga\in A$ so that $\dom(p_\ga)\cap\dom(r)=h$; so, $p_\ga$ is compatible with $r$. Let $r_\ga$ be the specific common lower bound of them defined by $\supp(r_\ga)=\supp(r)\cup\supp(p_\ga)$, for each $\gb\in\supp(r)$ $r_\ga(\gb)=r(\gb)$, and for each $\gb\in\supp(p_\ga)\setminus h$ $r_\ga(\gb)=p_\ga(\gb)$. Choose functions $v_0, v_1\colon \gw_2\to 2^n$ such that they differ only on $\alpha$-th entry, for each $\gb\neq\ga$ $v_0(\gb)=v_1(\gb)\in r_\ga(gb)$, and the nodes $v_0(\ga), v_1(\ga)\in p_\ga(\ga)$ disagree at some entry larger than $m$. Note that the conditions $r_\ga|v_0$ and $r_\ga|v_1$ coincide by the symmetry properties of the $\mathbb{E}_0$-tree $p_\ga(\ga)$.

Find a condition $s\leq r_\ga|v_0$ and a string $w\in 2^n$ such that $s$ forces the unique element of $\tau$ which is $\mathbb{E}_0$-equivalent to $(0^n)^\smallfrown\dot c_\ga$ to start with $\check w$. By the choice of $v_0$ and $v_1$, one of the strings $v_0(\ga),v_1(\ga)\in 2^n$ disagrees with $w$ at an entry $k>m$; for definiteness, assume $v_0(\ga)(k)=0$ while $w(k)=1$. Consider the condition $s||v_0\in\mathbb{P}_0$. It is stronger than $p_\ga$, so it forces the unique element of $\gs$ in the $\mathbb{E}_0$-class of $\dot c(\ga)$ to have $0$ at $m$-th entry. However, by the choice of $s$ it also forces that unique element to start with $w$. This is a contradiction, showing that no Vitali sets can exist in the intersection model.

\bibliographystyle{plain} 
\bibliography{odkazy,zapletal}

\end{document}